\newcommand{\refeq}[1]{{\rm (\ref{#1})}}
\newlength{\longeqmarginwidth}
\newlength{\longeqwidth}
\newlength{\longeqskiplength}
\newcommand{\Vzero}[2]{\mathord{\stackrel{\textstyle\kern1pt\circ}
{\smash V\vbox to6pt{}}}\vphantom{V}_{#1}^{#2}}
\newcommand{\function}[2]{:#1 \longrightarrow #2}
\newcommand{\of}[1]{\left( #1 \right)}
\newcommand{\set}[2]{\left\{\hspace{0.2ex} #1 \left|\: #2
\right. \right\}}
\newcommand{\eval}[2]{\llbracket #1 \rrbracket_{#2}}
\newcommand{\df}{\stackrel{\rm def}{=}}
\newcommand{\im}{{\rm im}}
\newcommand{\Hom}{{\rm Hom}}
\newcommand{\scr}{\mathcal}
\newcounter{operator}
\def\veca{\mathbf}
\def\vecb{\mathbf}
\title{On the Number of Pentagons in Triangle-Free Graphs}
\author{
Hamed Hatami\thanks{School of Computer Science, McGill University, {\tt hatami@cs.mcgill.ca}.} \and
Jan Hladk\'y\thanks{Mathematics Institute \emph{and} DIMAP, University of Warwick, {\tt honzahladky@gmail.com}. Supported by EPSRC award EP/D063191/1.} \and
Daniel Kr\'al'\thanks{Institute of Mathematics, DIMAP and Department of Computer Science, University of Warwick, Coventry CV4 7AL, United Kingdom. Previous affiliation: Institute of Computer Science (IUUK), Faculty of Mathematics and Physics, Charles University, Malostransk\'e n\'am\v est\'\i{} 25, 118 00 Prague 1, Czech Republic. E-mail: \texttt{D.Kral@warwick.ac.uk}. The work of this author leading to this invention has received funding from the European Research Council under the European Union's Seventh Framework Programme (FP7/2007-2013)/ERC grant agreement no.~259385.} \and
Serguei Norine\thanks{Department of Mathematics \& Statistics, McGill
University, {\tt snorin@math.mcgill.ca}.} \and
Alexander Razborov\thanks{University of Chicago, {\tt razborov@cs.uchicago.edu}. Part of this work was done while the author was at Steklov Mathematical Institute, supported by the Russian Foundation
for Basic Research, and at Toyota Technological Institute, Chicago.}
}
\date{}
\begin{document}
\maketitle
\begin{abstract}
Using the formalism of flag algebras,
we prove that
every triangle-free graph $G$ with $n$ vertices contains at most $(n/5)^5$ cycles of length five.
Moreover, the equality is attained only when $n$ is divisible by five and $G$ is the balanced
blow-up of the pentagon. We also compute the maximal number of pentagons and characterize extremal graphs in the non-divisible case provided $n$ is sufficiently large.
This settles a conjecture made by Erd\H{o}s in 1984.
\end{abstract}

\section{Introduction}

Triangle-free graphs need not be bipartite. But how exactly far from being bipartite can they be?
In 1984, Erd\H{o}s \cite[Questions 1 and 2]{Erd3} considered three quantitative refinements of this question. More precisely, he proposed to measure ``non-bipartiteness'' by:
\begin{itemize}
\item[{\bf (i)}]  the minimal possible number of edges in a subgraph spanned by half of the vertices;

\item[{\bf (ii)}] the minimal possible number of edges that have to be removed to make the graph bipartite;

\item[{\bf (iii)}] the number of copies of pentagons (cycles of length $5$) in the graph.
\end{itemize}

All these parameters vanish on bipartite graphs, and Erd\H{o}s conjectured that in the class of triangle-free graphs every one of them is maximized by balanced blow-ups of the pentagon. Simonovits (referred to in \cite{Erd3}) observed that another example which attains the conjectured extremum for~{\bf (i)} is provided by balanced blow-ups of the Petersen graph.
For {\bf (iii)}, Michael \cite{Mic} noticed that the cycle of length eight with four chords joining the opposite vertices
has eight pentagons, thus matching the number of pentagons in the eight-vertex (almost balanced) blow-ups
of the pentagon.

The first two of Erd\H{o}s's questions have been investigated in~\cite{EFPSBipartite,KriTriangleFree,KeeSudSparse}.
Gy\H{o}ri investigated the third question in~\cite{Gyo}. In
terms of densities, Erd\H{o}s's conjecture regarding {\bf (iii)} states that the
density of pentagons in any triangle-free graphs is at most $\frac{5!}{5^5}$.
Gy\H{o}ri proved an upper bound of $\frac{3^3\cdot 5!}{5\cdot
2^{14}}$ that is within a factor 1.03 of the optimal. F\"uredi (personal
communication) refined Gy\H{o}ri's approach and obtained an upper bound within a
factor 1.001 of the optimal.

In this paper we settle {\bf (iii)} in the density sense  (Theorem~\ref{main}), which also implies the exact solution when $n$ is divisible by 5 (Corollary~\ref{cor-main}).
The proof of this result
is a calculation in flag algebras (introduced in~\cite{flag}).
Furthermore, we obtain the asymptotic uniqueness (that, again, turns into the uniqueness in the ordinary sense when $5|n$) by a relatively
simple argument in Theorem~\ref{asymptotic_uniqueness}. In Section~\ref{sec:Exact} we use a more technical approach to find the exact solution for $n$ sufficiently large.
We leave it as an open question to prove an exact bound on the maximum number of pentagons
in an $n$-vertex triangle-free graph for all values of  $n$.

We assume a certain familiarity with the theory of flag algebras
from~\cite{flag}. For the proof of the central Theorem~\ref{main} only the most basic notions which deal with Cauchy-Schwarz type calculations are required. Thus, instead of trying to duplicate definitions, we occasionally give pointers to relevant places in~\cite{flag} and some subsequent papers. For our proof of Theorem~\ref{asymptotic_uniqueness} we need a little bit more than these basics. We recall the corresponding bits in Section~\ref{sec:MoreAdvancedFlags}.

\section{Preliminaries} \label{prel}

\subsection{Notation} \label{sec:notation}
We denote vectors with bold font, e.g. $\veca a=(\vecb a(1),\veca a(2), \veca a(3))$ is a vector with three coordinates.
For every positive integer $k$, let $[k]$ denote the set $\{1,\ldots,k\}$.  Following \cite[Definition 1]{flag}, for two graphs $H$ and $G$,  the density of $H$ in $G$  as an induced subgraph is denoted by $p(H,G)$. That is $p(H,G)$ is the probability that the subgraph induced on $|V(H)|$ randomly chosen vertices of $G$ is isomorphic to $H$. 

Except for the stand-alone Section~\ref{blowup}, we exclusively work \cite[\S
2]{flag} in the theory $T_{\mathrm{TF-Graph}}$ of triangle-free graphs. Recall
from~\cite{flag} that for a theory $T$ and a positive integer $n$, the set of
all models of $T$ on $n$ elements up to an isomorphism is denoted by
$\mathcal{M}_n[T]$. We work with the notion of types, flags, and flag algebras, and use the same notation as in \cite[\S 2.1]{flag} where this terminology  is introduced. Let us list those models, types and flags  that will be needed in this paper. 

Let $\rho\in \mathcal{M}_2[T_{\mathrm{TF-Graph}}]$ and $C_5\in \mathcal{M}_5[T_{\mathrm{TF-Graph}}]$ respectively denote the edge and the pentagon. These two graphs along with two other graphs that will be needed for proving the uniqueness and the exact result are illustrated in Figure \ref{models}.
\begin{figure}[tbp]
\begin{center}
\input{models.eepic}
\caption{\label{models} Models}
\end{center}
\end{figure}

We denote the {\em trivial type} of size 0 by $0$. Let $P$ denote the type of size 5 based on $C_5$ (see Figure \ref{types}). For $i=0,1,2$, let $\sigma_i$ denote the type of size 3 with $i$ edges where the labeling is chosen in such a way that the permutation of 1 and 2 is an automorphism (see Figure \ref{types}).
\begin{figure}[tbp]
\begin{center}
\setlength{\unitlength}{0.254mm}
\begin{picture}(424,92)(13,-86)
        \special{color rgb 0 0 0}\allinethickness{0.254mm}\special{sh 0.99}\put(15,-65){\ellipse{4}{4}} 
        \special{color rgb 0 0 0}\allinethickness{0.254mm}\special{sh 0.99}\put(40,-15){\ellipse{4}{4}} 
        \special{color rgb 0 0 0}\allinethickness{0.254mm}\special{sh 0.99}\put(65,-65){\ellipse{4}{4}} 
        \special{color rgb 0 0 0}\allinethickness{0.254mm}\special{sh 0.99}\put(115,-65){\ellipse{4}{4}} 
        \special{color rgb 0 0 0}\allinethickness{0.254mm}\special{sh 0.99}\put(140,-15){\ellipse{4}{4}} 
        \special{color rgb 0 0 0}\allinethickness{0.254mm}\special{sh 0.99}\put(165,-65){\ellipse{4}{4}} 
        \special{color rgb 0 0 0}\put(35,-86){\shortstack{$\sigma_0$}} 
        \special{color rgb 0 0 0}\put(135,-86){\shortstack{$\sigma_1$}} 
        \special{color rgb 0 0 0}\allinethickness{0.254mm}\special{sh 0.99}\put(215,-65){\ellipse{4}{4}} 
        \special{color rgb 0 0 0}\allinethickness{0.254mm}\special{sh 0.99}\put(240,-15){\ellipse{4}{4}} 
        \special{color rgb 0 0 0}\allinethickness{0.254mm}\special{sh 0.99}\put(265,-65){\ellipse{4}{4}} 
        \special{color rgb 0 0 0}\put(235,-86){\shortstack{$\sigma_2$}} 
        \special{color rgb 0 0 0}\allinethickness{0.254mm}\path(215,-65)(240,-15) 
        \special{color rgb 0 0 0}\allinethickness{0.254mm}\path(240,-15)(265,-65) 
        \special{color rgb 0 0 0}\allinethickness{0.254mm}\path(115,-65)(165,-65) 
        \special{color rgb 0 0 0}\put(20,-61){\shortstack{\scriptsize 1}} 
        \special{color rgb 0 0 0}\put(120,-61){\shortstack{\scriptsize 1}} 
        \special{color rgb 0 0 0}\put(220,-66){\shortstack{\scriptsize 1}} 
        \special{color rgb 0 0 0}\put(55,-61){\shortstack{\scriptsize 2}} 
        \special{color rgb 0 0 0}\put(155,-61){\shortstack{\scriptsize 2}} 
        \special{color rgb 0 0 0}\put(255,-66){\shortstack{\scriptsize 2}} 
        \special{color rgb 0 0 0}\put(45,-16){\shortstack{\scriptsize 3}} 
        \special{color rgb 0 0 0}\put(145,-16){\shortstack{\scriptsize 3}} 
        \special{color rgb 0 0 0}\put(245,-16){\shortstack{\scriptsize 3}} 
        \special{color rgb 0 0 0}\allinethickness{0.254mm}\special{sh 0.99}\put(325,-65){\ellipse{4}{4}} 
        \special{color rgb 0 0 0}\allinethickness{0.254mm}\special{sh 0.99}\put(365,-65){\ellipse{4}{4}} 
        \special{color rgb 0 0 0}\allinethickness{0.254mm}\special{sh 0.99}\put(375,-30){\ellipse{4}{4}} 
        \special{color rgb 0 0 0}\allinethickness{0.254mm}\special{sh 0.99}\put(315,-30){\ellipse{4}{4}} 
        \special{color rgb 0 0 0}\allinethickness{0.254mm}\special{sh 0.99}\put(345,-5){\ellipse{4}{4}} 
        \special{color rgb 0 0 0}\allinethickness{0.254mm}\path(315,-30)(345,-5) 
        \special{color rgb 0 0 0}\allinethickness{0.254mm}\path(345,-5)(375,-30) 
        \special{color rgb 0 0 0}\allinethickness{0.254mm}\special{sh 0.99}\put(325,-65){\ellipse{4}{4}} 
        \special{color rgb 0 0 0}\allinethickness{0.254mm}\special{sh 0.99}\put(365,-65){\ellipse{4}{4}} 
        \special{color rgb 0 0 0}\allinethickness{0.254mm}\special{sh 0.99}\put(375,-30){\ellipse{4}{4}} 
        \special{color rgb 0 0 0}\allinethickness{0.254mm}\special{sh 0.99}\put(315,-30){\ellipse{4}{4}} 
        \special{color rgb 0 0 0}\allinethickness{0.254mm}\special{sh 0.99}\put(345,-5){\ellipse{4}{4}} 
        \special{color rgb 0 0 0}\allinethickness{0.254mm}\path(315,-30)(345,-5) 
        \special{color rgb 0 0 0}\allinethickness{0.254mm}\path(345,-5)(375,-30) 
        \special{color rgb 0 0 0}\allinethickness{0.254mm}\special{sh 0.99}\put(325,-65){\ellipse{4}{4}} 
        \special{color rgb 0 0 0}\allinethickness{0.254mm}\special{sh 0.99}\put(365,-65){\ellipse{4}{4}} 
        \special{color rgb 0 0 0}\allinethickness{0.254mm}\special{sh 0.99}\put(375,-30){\ellipse{4}{4}} 
        \special{color rgb 0 0 0}\allinethickness{0.254mm}\special{sh 0.99}\put(315,-30){\ellipse{4}{4}} 
        \special{color rgb 0 0 0}\allinethickness{0.254mm}\special{sh 0.99}\put(345,-5){\ellipse{4}{4}} 
        \special{color rgb 0 0 0}\allinethickness{0.254mm}\path(315,-30)(345,-5) 
        \special{color rgb 0 0 0}\allinethickness{0.254mm}\path(345,-5)(375,-30) 
        \special{color rgb 0 0 0}\allinethickness{0.254mm}\special{sh 0.99}\put(325,-65){\ellipse{4}{4}} 
        \special{color rgb 0 0 0}\allinethickness{0.254mm}\special{sh 0.99}\put(365,-65){\ellipse{4}{4}} 
        \special{color rgb 0 0 0}\allinethickness{0.254mm}\special{sh 0.99}\put(375,-30){\ellipse{4}{4}} 
        \special{color rgb 0 0 0}\allinethickness{0.254mm}\special{sh 0.99}\put(315,-30){\ellipse{4}{4}} 
        \special{color rgb 0 0 0}\allinethickness{0.254mm}\special{sh 0.99}\put(345,-5){\ellipse{4}{4}} 
        \special{color rgb 0 0 0}\allinethickness{0.254mm}\path(315,-30)(345,-5) 
        \special{color rgb 0 0 0}\allinethickness{0.254mm}\path(345,-5)(375,-30) 
        \special{color rgb 0 0 0}\allinethickness{0.254mm}\path(325,-65)(315,-30) 
        \special{color rgb 0 0 0}\allinethickness{0.254mm}\path(365,-65)(375,-30) 
        \special{color rgb 0 0 0}\allinethickness{0.254mm}\special{sh 0.99}\put(325,-65){\ellipse{4}{4}} 
        \special{color rgb 0 0 0}\allinethickness{0.254mm}\special{sh 0.99}\put(365,-65){\ellipse{4}{4}} 
        \special{color rgb 0 0 0}\allinethickness{0.254mm}\special{sh 0.99}\put(375,-30){\ellipse{4}{4}} 
        \special{color rgb 0 0 0}\allinethickness{0.254mm}\special{sh 0.99}\put(315,-30){\ellipse{4}{4}} 
        \special{color rgb 0 0 0}\allinethickness{0.254mm}\special{sh 0.99}\put(345,-5){\ellipse{4}{4}} 
        \special{color rgb 0 0 0}\allinethickness{0.254mm}\path(315,-30)(345,-5) 
        \special{color rgb 0 0 0}\allinethickness{0.254mm}\path(345,-5)(375,-30) 
        \special{color rgb 0 0 0}\allinethickness{0.254mm}\special{sh 0.99}\put(325,-65){\ellipse{4}{4}} 
        \special{color rgb 0 0 0}\allinethickness{0.254mm}\special{sh 0.99}\put(365,-65){\ellipse{4}{4}} 
        \special{color rgb 0 0 0}\allinethickness{0.254mm}\special{sh 0.99}\put(375,-30){\ellipse{4}{4}} 
        \special{color rgb 0 0 0}\allinethickness{0.254mm}\special{sh 0.99}\put(315,-30){\ellipse{4}{4}} 
        \special{color rgb 0 0 0}\allinethickness{0.254mm}\special{sh 0.99}\put(345,-5){\ellipse{4}{4}} 
        \special{color rgb 0 0 0}\allinethickness{0.254mm}\path(315,-30)(345,-5) 
        \special{color rgb 0 0 0}\allinethickness{0.254mm}\path(345,-5)(375,-30) 
        \special{color rgb 0 0 0}\allinethickness{0.254mm}\special{sh 0.99}\put(325,-65){\ellipse{4}{4}} 
        \special{color rgb 0 0 0}\allinethickness{0.254mm}\special{sh 0.99}\put(365,-65){\ellipse{4}{4}} 
        \special{color rgb 0 0 0}\allinethickness{0.254mm}\special{sh 0.99}\put(375,-30){\ellipse{4}{4}} 
        \special{color rgb 0 0 0}\allinethickness{0.254mm}\special{sh 0.99}\put(315,-30){\ellipse{4}{4}} 
        \special{color rgb 0 0 0}\allinethickness{0.254mm}\special{sh 0.99}\put(345,-5){\ellipse{4}{4}} 
        \special{color rgb 0 0 0}\allinethickness{0.254mm}\path(315,-30)(345,-5) 
        \special{color rgb 0 0 0}\allinethickness{0.254mm}\path(345,-5)(375,-30) 
        \special{color rgb 0 0 0}\allinethickness{0.254mm}\special{sh 0.99}\put(325,-65){\ellipse{4}{4}} 
        \special{color rgb 0 0 0}\allinethickness{0.254mm}\special{sh 0.99}\put(365,-65){\ellipse{4}{4}} 
        \special{color rgb 0 0 0}\allinethickness{0.254mm}\special{sh 0.99}\put(375,-30){\ellipse{4}{4}} 
        \special{color rgb 0 0 0}\allinethickness{0.254mm}\special{sh 0.99}\put(315,-30){\ellipse{4}{4}} 
        \special{color rgb 0 0 0}\allinethickness{0.254mm}\special{sh 0.99}\put(345,-5){\ellipse{4}{4}} 
        \special{color rgb 0 0 0}\allinethickness{0.254mm}\path(315,-30)(345,-5) 
        \special{color rgb 0 0 0}\allinethickness{0.254mm}\path(345,-5)(375,-30) 
        \special{color rgb 0 0 0}\put(345,-86){\shortstack{$P$}} 
        \special{color rgb 0 0 0}\allinethickness{0.254mm}\path(325,-65)(315,-30) 
        \special{color rgb 0 0 0}\allinethickness{0.254mm}\path(365,-65)(375,-30) 
        \special{color rgb 0 0 0}\allinethickness{0.254mm}\path(325,-65)(365,-65) 
        \special{color rgb 0 0 0}\put(325,-61){\shortstack{\scriptsize 1}} 
        \special{color rgb 0 0 0}\put(370,-66){\shortstack{\scriptsize 2}} 
        \special{color rgb 0 0 0}\put(380,-31){\shortstack{\scriptsize 3}} 
        \special{color rgb 0 0 0}\put(350,-6){\shortstack{\scriptsize 4}} 
        \special{color rgb 0 0 0}\put(320,-36){\shortstack{\scriptsize 5}} 
        \special{color rgb 0 0 0} 
\end{picture}
\caption{\label{types} Types}
\end{center}
\end{figure}



For a type $\sigma$ of size $k$ and an independent set of vertices $V\subseteq [k]$ in $\sigma$, let $F^\sigma_V$ denote the flag $(G,\theta)\in \scr F^\sigma_{k+1}$ in which the only unlabeled vertex $v$ is connected to the set $\set{\theta(i)}{i\in V}$. Note that since we are working in the theory of triangle-free graphs, we have
$$\scr F^\sigma_{k+1} = \{ F^\sigma_V \ | \ \mbox{$V\subseteq [k]$ is an independent set in $\sigma$} \}.$$

\subsection{Operator $\pi^\sigma$, and extension measures}\label{sec:MoreAdvancedFlags}
As usual, the algebra generated by $\sigma$-flags is denoted $\scr A^\sigma$. We then define the following ``upward operator'' $\pi^\sigma:\scr A^0\rightarrow \scr A^\sigma$
as introduced in~\cite[\S 2.3, \S 2.3.1]{flag}. For a triangle-free graph $G\in\scr F^0_{k}\ (= \scr M_k)$, we let $\pi^\sigma(G)$ be the sum of all those
$F=(H,\theta)\in \mathcal{F}^\sigma_{k+|\sigma|}$ for which the unlabeled vertices form a copy of $G$, i.e., $H-\im(\theta)$ is isomorphic to $G$. We then extend $\pi^\sigma$ linearly to $\scr A^0$. It turns out~\cite[Theorem 2.6]{flag} that $\pi^\sigma:\scr A^0\rightarrow \scr A^\sigma$ is an algebra homomorphism.

The next notion we shall need is that of extension measure, as introduced in~\cite[Definition 8]{flag}. Suppose that $\phi\in\Hom^+(\scr A^0[T_{\mathrm{TF-Graph}}], \mathbb{R})$ is such that
\begin{equation}\label{eq:ViewAsunlabelled}
\phi(\sigma)>0
\end{equation}
for a type $\sigma$ (in~\eqref{eq:ViewAsunlabelled} we view $\sigma$ as an unlabeled graph). Then, there exists a unique probability measure $\bf P^\sigma$ on Borel sets of $\Hom^+(\scr A^\sigma[T_{\mathrm{TF-Graph}}], \mathbb{R})$ with the property that
\begin{equation}\label{eq:defExtMeas}
\int_{\Hom^+(\scr A^\sigma[T_{\mathrm{TF-Graph}}], \mathbb{R})}\psi(f)\; {\bf P}^\sigma(d\psi)=\frac{\phi(\eval{f}\sigma)}{\phi(\eval{1_\sigma}\sigma)}\;, \end{equation}
for any $f\in \scr A^\sigma$. See~\cite[Theorem 3.5]{flag}. We say that $\bf P^\sigma$ \emph{extends} $\phi$. $S^\sigma(\phi)$ is the {\em support} of this measure, i.e. the minimal
closed subset $A$ such that ${\bf P}^\sigma[A]=1$ \cite[Section 2.1.1]{fdf}. Note that the integration in \eqref{eq:defExtMeas} can be restricted to $S^\sigma(\phi)$.

Observe that $\phi$ can be reconstructed from its extension ${\bf P}^\sigma$ simply by picking an arbitrary $\psi\in S^\sigma(\phi)$ and letting
\begin{equation}\label{eq:reconstruct}
\phi(g)=\psi(\pi^\sigma(g))\ (g\in \scr A^0)
\end{equation}
(cf.~\cite[Corollary 3.19]{flag}).

\subsection{Infinite blow-ups and $\Hom^+(\scr A^0[T_{\mathrm{Graph}}], \mathbb{R})$} \label{blowup}

In order to convert the asymptotic result into the exact one, we need to explore a little bit more the connection between blow-ups of a graph and the corresponding algebra homomorphism from $\Hom^+(\scr A^0[T_{\mathrm{Graph}}], \mathbb{R})$ already used in a similar context in \cite[Theorem 4.1]{triangles}. 

For a finite graph $G$ and a positive integer vector $\veca k=( \vecb k(v) \: | \:v\in V(G) )$, we define the {\em blow-up} $G^{(\veca k)}$ of $G$ as the graph with
\begin{eqnarray*}
V(G^{(\veca k)}) &\df& \bigcup_{v\in V(G)} \{v\}\times [\vecb k(v)]\\
E(G^{(\veca k)}) &\df& \set{((v,i),(w,j))}{v\neq w\ \land\ (v,w)\in E(G)}.
\end{eqnarray*}

When all $\vecb k(v)$ are equal to some positive integer $k$, the corresponding blow-up is called {\em balanced} and denoted simply by $G^{(k)}$.

For every graph $H$, it is easy to see that the sequence $\{p(H,G^{(k)})\}_{k \in \mathbb{N}}$ is convergent. It follows~\cite[\S 3]{flag} that there exists a homomorphism $\phi_G \in \Hom^+(\scr A^0[T_{\mathrm{Graph}}], \mathbb{R})$ such that for every graph $H$, we have $\lim_{k \rightarrow \infty} p(H,G^{(k)}) = \phi_G(H)$. Note that since the blow-up of a triangle-free graph is also triangle-free, if $G$ is triangle-free, then actually $\phi_G\in \Hom^+(\scr A^0[T_{\mathrm{TF-Graph}}], \mathbb{R})$.


Let us now give a combinatorial description of $\phi_G$. For a finite graph $H$, let us denote by $s(H,G)$ the number of {\em strong homomorphisms} from $H$ to $G$ that we define as those mappings $\alpha\function{V(H)}{V(G)}$ for which $(\alpha(v),\alpha(w))\in E(G)$ if and only if $(v,w)\in E(H)$.
This notion is a natural hybrid of induced embeddings and graph homomorphisms, and it is a very special case of the notion of trigraph homomorphisms (see e.g. \cite{HeNe}). It is easy to check that
\begin{equation}\label{general_formula}
    \phi_G(H) =\frac{m!}{|\mathrm{Aut}(H)|}\cdot\frac{s(H,G)}{n^m},
\end{equation}
where  $m$ and $n$ respectively denote $|V(H)|$ and $|V(G)|$, and $\mathrm{Aut}(H)$ is the group of automorphisms of $H$.

Let us say that $H$ is {\em twin-free} if no two vertices in $H$ have the same set of neighbors. Every strong homomorphism of a twin-free graph into any other graph is necessarily an induced embedding. Therefore, for twin-free $H$, we have $s(H,G)=p(H,G){n\choose m}|\mathrm{Aut}(H)|$ and \refeq{general_formula} considerably simplifies to
\begin{equation}\label{rigd}
    \phi_G(H) = p(H,G)\cdot \frac{n(n-1)\ldots (n-m+1)}{n^m}.
\end{equation}

For the special case $H=K_r$, this formula was already used in \cite[Section 4.1]{triangles}), and in this paper we are interested in another case,
$$
\phi_{C_5}(C_5)=\frac{5!}{5^5}.
$$

Our approach to extracting exact results from asymptotic ones heavily relies on the fact that $\phi_G$ is a graph invariant. This was first proven by Lov\'asz~\cite{LovaszOperations}. A simple proof of a similar statement in the context of graph limits is given in~\cite[Theorem~5.32]{LovaszBook}.
\begin{theorem} \label{invariance}
Let $G_1$ and $G_2$ be finite graphs with the same number of vertices and such that $\phi_{G_1}=\phi_{G_2}$. Then $G_1$ and $G_2$ are isomorphic.
\end{theorem}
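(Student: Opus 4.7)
The plan is to translate the hypothesis $\phi_{G_1}=\phi_{G_2}$ into an equality of induced-subgraph densities and then exploit $|V(G_1)|=|V(G_2)|=n$. Applied to any finite graph $H$, formula~\refeq{general_formula} expresses $\phi_G(H)$ as a multiple of $s(H,G)$ by a factor depending only on $H$ and on $n$. Since $G_1$ and $G_2$ share the same $n$, the hypothesis upgrades immediately to the equality $s(H,G_1)=s(H,G_2)$ for every finite graph $H$.

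The main step is to transfer this equality of strong-homomorphism counts into an equality of induced-subgraph densities $p(H,G_1)=p(H,G_2)$, via a M\"obius-style inversion on the kernel of a strong homomorphism. Any strong homomorphism $\alpha \function{V(H)}{V(G)}$ determines an equivalence relation $\pi_\alpha$ on $V(H)$ by $u\sim v \Leftrightarrow \alpha(u)=\alpha(v)$. The defining property of a strong homomorphism forces $\pi_\alpha$ to be \emph{admissible} in the sense that (i) each equivalence class is an independent set in $H$, and (ii) for any two distinct classes, either all or none of the pairs between them are edges of $H$. For admissible $\pi$ the quotient graph $H/\pi$ is well defined, and $\alpha$ factors through $H/\pi$ as an injective strong homomorphism into $G$. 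Since the number of injective strong homomorphisms from a graph $H'$ to $G$ equals $|\mathrm{Aut}(H')|\binom{n}{|V(H')|}p(H',G)$, grouping strong homomorphisms by their kernel produces the identity
$$ s(H,G)=\sum_{\pi\in\mathrm{Adm}(H)}|\mathrm{Aut}(H/\pi)|\binom{n}{|V(H/\pi)|}p(H/\pi,G). $$
The trivial partition contributes $|\mathrm{Aut}(H)|\binom{n}{|V(H)|}p(H,G)$, while every non-trivial admissible $\pi$ satisfies $|V(H/\pi)|<|V(H)|$. Using $s(H,G_1)=s(H,G_2)$, $|\mathrm{Aut}(H)|\neq 0$, and induction on $|V(H)|$, one concludes $p(H,G_1)=p(H,G_2)$ for every $H$. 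This is the heart of the argument and the only step that is not immediate from~\refeq{general_formula}: the admissibility conditions and the fact that every non-trivial admissible quotient is strictly smaller must be verified carefully so that the induction closes.

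Finally, specializing to $H=G_1$: since $|V(G_2)|=|V(G_1)|=n$, the density $p(G_1,G_2)$ equals $1$ if $G_1\cong G_2$ and equals $0$ otherwise, while $p(G_1,G_1)=1$ holds tautologically. Together these force $G_1\cong G_2$, completing the proof.
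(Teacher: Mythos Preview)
Your argument is correct. The reduction from $\phi_{G_1}=\phi_{G_2}$ to $s(H,G_1)=s(H,G_2)$ via \refeq{general_formula}, the kernel decomposition of $s(H,G)$ over admissible partitions, the induction on $|V(H)|$ (which is valid since $\binom{n}{m}\neq 0$ for all $m\le n$, and only $H$ with $|V(H)|\le n$ are needed), and the final specialization $H=G_1$ all go through as stated.

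As for comparison: the paper does not actually prove this theorem. It is quoted as a known result of Lov\'asz~\cite{LovaszOperations}, with a pointer to \cite[Theorem~5.32]{LovaszBook} for a proof in the graph-limit language. Your proof is essentially the classical Lov\'asz argument---a M\"obius-type inversion on the lattice of admissible partitions to pass from (strong) homomorphism counts to induced-subgraph counts---carried out directly and self-containedly. So you are not deviating from the intended route; you are simply supplying the details that the paper outsources to the literature.
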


\section{Main results}

Recall \cite[Definitions 5 and 6]{flag} that for a non-degenerate type $\sigma$ in a theory $T$, and $f,g\in\scr A^\sigma[T]$, the inequality $f\leq_\sigma q$ means that $\phi(f)\leq \phi(g)$ for every $\phi\in\Hom^+(\scr A^\sigma[T], \mathbb{R})$: this is the class of all inequalities that hold asymptotically on flags of the given theory \cite[Corollary 3.4]{flag}.
We abbreviate $f\leq_\sigma g$ to $f\leq g$ when $\sigma$ is clear from the context. Our first theorem, which answers the question of Erd\H{o}s, says that in the theory of triangle-free graphs, we have $C_5\leq \frac{5!}{5^5}$. Note that while in the theory of general graphs, the flag $C_5$ corresponds to \emph{induced} pentagons, in the theory of triangle-free graphs, every pentagon is induced.

\begin{theorem} \label{main}
In the theory $T_{\mathrm{TF-Graph}}$, we have
$$
C_5\leq \frac{5!}{5^5}.
$$
\end{theorem}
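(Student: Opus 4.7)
My plan is to apply the standard semidefinite-programming certificate of flag algebras, exhibiting explicit positive semidefinite matrices $M_0,M_1,M_2$ --- indexed by the four-vertex $\sigma_i$-flags $F^{\sigma_i}_V$ with $V$ ranging over independent subsets of $[3]$ in $\sigma_i$ --- such that the difference
\[
\frac{5!}{5^5}\cdot 1 \;-\; C_5 \;-\; \sum_{i=0}^{2}\sum_{V,V'} M_i(V,V')\,\eval{F^{\sigma_i}_V\cdot F^{\sigma_i}_{V'}}{\sigma_i}
\]
expands in $\scr A^0[T_{\mathrm{TF-Graph}}]$ as a non-negative linear combination of the $5$-vertex triangle-free models in $\mathcal{M}_5[T_{\mathrm{TF-Graph}}]$. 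Each inner double sum is non-negative in $\scr A^0$: a Cholesky decomposition $M_i=A_iA_i^{\top}$ rewrites it as $\sum_{\alpha}\eval{f_\alpha\cdot f_\alpha}{\sigma_i}$ for suitable $f_\alpha\in\scr A^{\sigma_i}$, and each Cauchy--Schwarz square is non-negative by the basic theory of~\cite{flag}. Since $1=\sum_{G\in\mathcal{M}_5[T_{\mathrm{TF-Graph}}]}G$ via the chain relations, such a decomposition immediately yields $C_5\leq\frac{5!}{5^5}$.

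Concretely the steps are: (i) enumerate $\mathcal{M}_5[T_{\mathrm{TF-Graph}}]$ and list the flags $\{F^{\sigma_i}_V\}$ for each $i$; (ii) for every pair $(V,V')$, compute the product $F^{\sigma_i}_V\cdot F^{\sigma_i}_{V'}\in\scr A^{\sigma_i}$ and apply the averaging operator $\eval{\cdot}{\sigma_i}$ to express it as an explicit rational linear combination of elements of $\mathcal{M}_5[T_{\mathrm{TF-Graph}}]$; (iii) solve the resulting semidefinite program numerically; (iv) round the numerical solution to an exact rational certificate and verify positive semidefiniteness of the $M_i$ together with non-negativity of the residual coefficients. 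The SDP itself is small --- three types of size three, a handful of flags over each --- so steps (i)--(iii) are routine.

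The main obstacle is step (iv). The balanced blow-up $C_5^{(k)}$ realizes equality, so the SDP is tight at the optimum and the extremal matrices $M_i$ must lie on the boundary of the psd cone, with substantial kernels whose null vectors encode precisely the flags occurring in the pentagon blow-up. Naive rounding of the numerical optimum almost surely either violates positive semidefiniteness or forces negative residual coefficients. I would therefore identify the correct kernels in advance --- the same information exploited by the uniqueness argument in Theorem~\ref{asymptotic_uniqueness} --- restrict the SDP to the corresponding face of the psd cone, and only then round. After this, verification is mechanical: finitely many product expansions and eigenvalue checks.
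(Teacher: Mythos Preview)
Your proposal is correct and follows essentially the same route as the paper: a flag-algebra SDP certificate over the three size-$3$ types $\sigma_0,\sigma_1,\sigma_2$, expanded in $\mathcal{M}_5[T_{\mathrm{TF-Graph}}]$ and then rounded to an exact rational solution. The paper carries this out explicitly, additionally isolating an edge-density square $200(\rho-\tfrac25)^2$ and singling out the slack coefficients on $M_4$ and $C_5^-$ (needed later for uniqueness), and it exploits the $1\leftrightarrow 2$ symmetry of each $\sigma_i$ to block-diagonalize the matrices into $+/-$ parts before rounding---precisely the ``identify the kernels and restrict to a face'' step you anticipate.
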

\begin{proof}
The proof is by a direct computation in the flag algebra $\scr A^0[T_{\mathrm{TF-Graph}}]$ (cf. \cite{turan,HKN} and \cite[Section 4.1]{fdf}). We claim that
\begin{eqnarray}
\nonumber
\lefteqn{62500C_5 + \frac{1097}{12}M_4+\frac{68}{3}C_5^-  + \left(\sum_{i=0}^2 \eval{Q_i^+(\veca g_i^+)}{\sigma_i}\right) + }\\ & & 200\of{\rho-\frac 25}^2 + \eval{Q_1^-(\veca g_1^-)}{\sigma_1} + 158266\eval{(F^{\sigma_2}_{\{1\}}- F^{\sigma_2}_{\{2\}})^2}{\sigma_2}   \leq 2400.
\label{main_inequality}
\end{eqnarray}
The graphs $M_4$ and $C_5^-$ are illustrated on Figure \ref{models}. For the definition of the algebra operations see \cite[Eq. (5)]{flag}, and for the definition of the averaging operator $\eval{\cdot}{}$ see \cite[\S 2.2]{flag}. Let us now define the notations $\veca g_i^{+/-}$ and $Q_i^{+/-}$ in \refeq{main_inequality}.

For a type $\sigma$ of size $k$ and an integer $0\leq j\leq k$, we let
$$
f_j^\sigma\df\sum\set{F^\sigma_V}{V\subseteq [k]\ \text{an independent set of size $j$ in $\sigma$}},
$$
where $F^\sigma_V$ are as defined in Section~\ref{sec:notation}. The vectors $\veca g_i^{+/-}$ are the following tuples of elements from $\scr A^{\sigma_i}_4$:
\begin{eqnarray*}
\veca g_0^+ &\df& (f_1^{\sigma_0}-f_2^{\sigma_0},\ f_1^{\sigma_0}-2f_0^{\sigma_0}+3f_3^{\sigma_0});\\
\veca g_1^+ &\df& (2f^{\sigma_1}_0-f^{\sigma_1}_1,\ f^{\sigma_1}_1-f^{\sigma_1}_2,\ F^{\sigma_1}_{\{3\}});\\
\veca g_2^+ &\df& (6f_0^{\sigma_2}+f_1^{\sigma_2}-4f_2^{\sigma_2},\ 2f^{\sigma_2}_0-2f^{\sigma_2}_2+F^{\sigma_2}_{\{3\}});\\
\veca g_1^- &\df& (F^{\sigma_1}_{\{1\}} - F^{\sigma_1}_{\{2\}},\ F^{\sigma_1}_{\{2,3\}} - F^{\sigma_1}_{\{1,3\}}),
\end{eqnarray*}
and $Q_i^{+/-}$ are positive-definite quadratic forms represented by the following positive-definite matrices:
$$
M_0^+\df \left( \begin {array}{cc} 9760&2252\\\noalign{\medskip}2252&592
\end {array} \right) \hspace{2cm} M_1^+\df \left( \begin {array}{ccc} 13900&-671&-12807\\\noalign{\medskip}-671&
31334&-51136\\\noalign{\medskip}-12807&-51136&98157\end {array}
 \right)
$$
$$
M_2^+\df \left( \begin {array}{cc} 22708&-40788\\\noalign{\medskip}-40788&
78132\end {array} \right) \hspace{2cm} M_1^-\df \left( \begin {array}{cc} 1416&-16452\\\noalign{\medskip}-16452&
256488\end {array} \right).
$$
The inequality \refeq{main_inequality} can be checked by expanding the left-hand side as a linear combination of elements from $\scr M_5$ (that is, triangle-free graphs on 5 vertices -- there are 14 of them) and checking that all coefficients are less or equal than 2400. We verified the inequality using a Maple sheet and a C program which were independently prepared. The C program is available as an ancillary file on the arXiv (arXiv:1102.1634).

It follows from \cite[Theorem 3.14]{flag} that all the summands on the left-hand side of \refeq{main_inequality} are nonnegative as elements of $\mathcal{A}^0[T_{\mathrm{TF-Graph}}]$ which in turn implies that $C_5 \le \frac{2400}{62500}=5!/5^5$.
\end{proof}

\begin{remark}
An explanation of our usage of the $+/-$ superscripts  in the proof of Theorem~\ref{main} can be found in \cite[Section 4]{turan}. Here, the particular choice of subspaces spanned by the vectors $\veca g_i^{+/-}$ is dictated by the same principles as in \cite[Section 4]{turan}.
\end{remark}

\medskip
There are two possible and rather straightforward generalizations of the above problem, neither of which we could find in the literature. One can ask for the maximum number of copies of odd cycles $C_{2\ell+1}$, or for the maximum number of induced copies of odd cycles $C_{2\ell+1}$ in a triangle-free graph of a fixed order. In our pentagon case $\ell=2$ the two questions are the same. Emil Vaughan communicated to us that using the flag algebras method he can prove that the blow-ups of the pentagon and of the heptagon, respectively are asymptotically extremal for the two problems for $\ell=3$.
\medskip

Let us now turn to the question of uniqueness of the original problem.
\begin{theorem} \label{asymptotic_uniqueness}
The homomorphism $\phi_{C_5}$ is the unique element in $\Hom^+(\scr A^0[T_{\mathrm{TF-Graph}}], \mathbb{R})$ that fulfills
\begin{equation}\label{extremal}
    \phi(C_5)=\frac{5!}{5^5}.
\end{equation}
\end{theorem}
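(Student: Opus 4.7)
The plan is to exploit equality in the main inequality \eqref{main_inequality}. Since $62500\cdot 5!/5^5 = 2400$, substituting $\phi$ into \eqref{main_inequality} produces equality. Every summand on the left-hand side is a provably non-negative element of $\scr A^0[T_{\mathrm{TF-Graph}}]$, so each must vanish under $\phi$. This yields $\phi(M_4) = \phi(C_5^-) = 0$, $\phi(\rho) = 2/5$, and $\phi(\eval{Q_i^{\pm}(\veca g_i^{\pm})}{\sigma_i}) = 0$ for each quadratic-form summand.

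Positive-definiteness of every matrix $M_i^{\pm}$ upgrades the quadratic vanishings to the pointwise statement $\psi(\veca g_i^{\pm}) = 0$ for ${\bf P}^{\sigma_i}$-almost every $\psi \in S^{\sigma_i}(\phi)$. These amount to explicit linear identities among the densities $\psi(F^{\sigma_i}_V)$ that must hold throughout the support of each extension at each of the intermediate types $\sigma_0, \sigma_1, \sigma_2$.

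Since $\phi(P) = \phi(C_5) > 0$, the extension measure ${\bf P}^P$ at the pentagon type is well-defined. An element $\psi \in S^P(\phi)$ records the view from a fixed pentagon $v_1\ldots v_5$. Triangle-freeness restricts the pentagon-neighborhood of every other vertex to an independent subset of $\{v_1,\ldots,v_5\}$, hence to $\emptyset$, a singleton, or one of the five opposite-pair patterns. Projecting $\psi$ onto the 3-subsets of the pentagon yields $\sigma_0$-, $\sigma_1$- or $\sigma_2$-flags, and plugging into the identities of the previous paragraph---together with $\phi(M_4) = \phi(C_5^-) = 0$ and $\phi(\rho) = 2/5$---forces the empty and singleton patterns to have probability zero and each of the five opposite-pair patterns to have probability $1/5$. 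A parallel analysis of the edge statistics inside and between the resulting five classes confirms that $\psi$ coincides with the canonical pentagon-view of $\phi_{C_5}$.

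The reconstruction formula \eqref{eq:reconstruct} then gives $\phi(g) = \psi(\pi^P(g)) = \phi_{C_5}(g)$ for every $g\in \scr A^0$, so $\phi = \phi_{C_5}$. The main obstacle is the third paragraph: translating the pointwise identities $\psi(\veca g_i^{\pm}) = 0$---originally statements about 3-vertex sub-types---into the global claim that $\psi$ encodes the balanced $C_5$-blow-up structure. This requires carefully tracking how each 3-subset of the pentagon inherits one of the sub-types $\sigma_0, \sigma_1, \sigma_2$, and combining the inherited identities with the forbidden-subgraph conditions $\phi(M_4) = \phi(C_5^-) = 0$.
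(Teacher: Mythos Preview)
Your plan diverges from the paper's in a significant way. The paper uses only two consequences of \eqref{main_inequality}, namely $\phi(M_4)=0$ and $\phi(C_5^-)=0$; it never touches the quadratic-form vanishings or $\phi(\rho)=2/5$. From those two zero densities alone, the simple inequalities $\eval{F^P_\emptyset}{P}\le C_5^-$ and $\eval{F^P_{\{i\}}}{P}\le \alpha M_4$ kill the empty and singleton attachment patterns for any $\phi^P\in S^P(\phi)$. The balance $\phi^P(H^P_i)=1/5$ is then obtained not from any $\sigma_i$-identity but from an AM--GM argument: one computes $\phi(C_5)=\phi^P(\pi^P(C_5))=\phi^P((C_5^{(2)})^P)\le 5!\prod_i\phi^P(H^P_i)\le 5!/5^5$, and equality forces all five weights equal. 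The resulting no-slackness equation $\phi^P\bigl(5!\prod_i H^P_i-(C_5^{(2)})^P\bigr)=0$ is what then settles the two-vertex (edge-between-classes) structure at the level of $\scr F^P_7$, after which $\phi^P(\pi^P(H))$ reduces to a sum over strong homomorphisms $H\to C_5$.

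Your alternative---pushing the ${\bf P}^{\sigma_i}$-a.e.\ identities $\psi(\veca g_i^{\pm})=0$ up to a fixed $\psi\in S^P(\phi)$ via restriction to $3$-subsets---is in principle a workable strategy, but it is both longer and has two real gaps. First, you must prove that the label-forgetting maps send $S^P(\phi)$ into $S^{\sigma_i}(\phi)$ (equivalently, that the pushforward of ${\bf P}^P$ is absolutely continuous with respect to ${\bf P}^{\sigma_i}$); this is the obstacle you flag, and it is genuine, not merely bookkeeping. Second, your ``parallel analysis of the edge statistics'' is a placeholder: determining the inter-class edge densities is where the paper spends most of its effort (via the slackness equation above), and it is not clear the $\sigma_i$-identities by themselves give you enough linear relations to do this without re-invoking the extremality $\phi(C_5)=5!/5^5$ directly at the $P$-level. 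The paper's route sidesteps both issues by never leaving the type $P$.
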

\begin{proof}
Fix $\phi\in\Hom^+(\scr A^0[T_{\mathrm{TF-Graph}}], \mathbb{R})$ such that \refeq{extremal} holds. Recall that $P$ is the type of size $5$ based on $C_5$ (see Figure \ref{types}).
Instead of proving directly that $\phi=\phi_{C_5}$, we will argue about the extension ${\bf P}^P$ of $\phi$ and then utilize  ~\eqref{eq:reconstruct}.

Observe that~\refeq{main_inequality} implies that
\begin{equation}\label{phi_zeros}
    \phi(M_4)=\phi(C_5^-)=0.
\end{equation}
Trivially, $\eval{F^P_\emptyset}{P}\leq C_5^-$. As $M_4$ is an induced subgraph of each of the graphs $F^P_{\{1\}}, \ldots, F^P_{\{5\}}$ (viewed as unlabelled graphs), there exists a constant $\alpha>0$ such that $\eval{F^P_{\{i\}}}{P}\leq \alpha M_4$ for every $i\in \mathbb{Z}_5$.  Hence~\refeq{phi_zeros} implies that
\begin{equation}\label{eq:zerozF^P}
\phi(\eval{F^P_\emptyset}{P})=\phi(\eval{F^P_{\{i\}}}{P})=0\;.
\end{equation}

Let $Y$ be the set of those elements $\psi\in \Hom^+(\scr A^P[T_{\mathrm{TF-Graph}}],\mathbb{R})$ for which $\psi(F^P_\emptyset)\neq 0$, or $\psi(F^P_{\{i\}})\neq 0$ for some
$i\in \mathbb{Z}_5$;  note that $Y$ is open. We claim that
\begin{equation}\label{eq:Xzero}
Y\cap S^P(\phi)=\emptyset\;.
\end{equation}
Indeed, let us consider the $P$-flag $f=F^P_\emptyset+\sum_i F^P_{\{i\}}$. By~\eqref{eq:zerozF^P} we have $\phi(\eval{f}P)=0$. Plugging this in~\eqref{eq:defExtMeas}, we have
$$\int_{S^P(\phi)}\psi(f)\; {\bf P}^P(d\psi)=0\;.$$
Further, as $f\ge_P 0$, the integrand $\psi(f)$ is non-negative. Therefore, $\psi(f)=0$ for ${\bf P}^P$-almost all elements $\psi$ and, since $Y$ is open, ~\eqref{eq:Xzero} follows.

Pick an arbitrary $\phi^P\in S^P(\phi)$. Let us examine $\phi^P(F^P_V)$ for flags $F_V^P\in\scr F^P_6$.
Since $V$ is an independent set in $C_5$, we have $|V|\leq 2$, and moreover, if $|V|=2$, then $V=\{i-1,i+1\}$ for some $i\in \mathbb{Z}_5$.
As $\phi^P\not\in Y$, we have $\phi^P(F^P_\emptyset)=\phi^P(F^P_{\{i\}})=0$.

In other words, $\phi^P(F^P_V)$ can be non-zero only when $V=\{i-1,i+1\}$ for some $i\in \mathbb{Z}_5$. Define $H^P_i\df F^P_{\{i-1,i+1\}}$. We have $\sum_{i\in \mathbb{Z}_5}\phi(H^P_i)=1$, and thus the inequality of aritmetic and geometric means gives that
\begin{equation}\label{eq:AG}
\prod_{i\in \mathbb{Z}_5}\phi(H^P_i)\le 5^{-5}\;,
\end{equation}
with equality only when $\phi(H^P_1)=\ldots=\phi(H^P_5)=1/5$.

Recall that $\pi^P(C_5)$ can be represented as the sum of those $F=(G,\theta)\in \mathcal{F}^P_{10}$ for which the unlabeled vertices form a copy of $C_5$, say $V(G)\setminus\im(\theta)=\{v_1,v_2,\ldots, v_5\}$ where $v_j$ is adjacent to $v_{j-1}$ and $v_{j+1}$. By the above discussion, non-zero contributions to $\phi^P(\pi^P(C_5))$ can be made only by those $F$ for which every $v_j$ is adjacent to $\theta(i(j)-1)$ and $\theta(i(j)+1)$ for some choice of $i(j)\in \mathbb{Z}_5$. Since $G$ is triangle-free, the mapping $j\mapsto i(j)$  defines a graph homomorphism of the pentagon into itself, and since there are no such graph homomorphisms other than isomorphisms, we may assume without loss of generality that every $v_j$ is adjacent to $\theta(j-1)$ and $\theta(j+1)$.
In other words, $\phi^P(\pi^P(C_5))=\phi^P((C_5^{(2)})^P)$, where $(C_5^{(2)})^P$ is the uniquely defined $P$-flag based on  $C_5^{(2)}$, the blow-up of the pentagon.

Observe that
\begin{equation}\label{no_slackness}
  (C_5^{(2)})^P \:\leq_P\: 5!\cdot \prod_{i\in \mathbb{Z}_5}H_i^P\;,
\end{equation}
and thus by ~\eqref{eq:reconstruct} and \eqref{eq:AG} we have the following chain of inequalities
$$
  \frac{5!}{5^5}=\phi(C_5)= \phi^P(\pi^P(C_5))= \phi^P((C_5^{(2)})^P)\le 5! \prod_{i\in \mathbb{Z}_5} \phi^P(H^P_i)\le \frac{5!}{5^5}\;.
$$
Consequently, ~\eqref{eq:AG} must actually be an equality, and therefore $\phi^P(H^P_1)=\ldots=\phi^P(H^P_5)=1/5$. Further, there is no slackness in~\refeq{no_slackness}, i.e.,
\begin{equation}\label{no_slackness2}
    \phi^P\left(5!\cdot \prod_{i\in \mathbb{Z}_5}H_i^P-(C_5^{(2)})^P \right)=0.
\end{equation}
This equality allows us to completely describe the behavior of $\phi^P$ also on $\scr F^P_7$. For $i,j \in \mathbb{Z}_5$, let $H^P_{ij} \in \scr F_7^P$ be defined by adding two unlabeled non-adjacent vertices to $P$ and connecting one of them to $\theta(i-1)$ and $\theta(i+1)$ and the other to $\theta(j-1)$ and $\theta(j+1)$. Note that if $i=j$ or $(i,j)\not \in E(P)$, then the product $H^P_iH^P_j$ is equal to $q_{ij}H^P_{ij}$, where $q_{ij}=1$ if $i=j$, and $q_{ij}=1/2$
otherwise (since adding an edge between the two unlabeled vertices would have created a triangle). Hence, in this case $\phi^P(H^P_{ij})=\frac{1}{25q_{ij}}$. On the other hand,
if $(i,j)\in E(P)$, then $$5!\cdot H_{ij}^P\cdot\prod_{k\in \mathbb{Z}_5\setminus \{i,j\}}H^P_k \:\leq_P\: 5!\cdot \prod_{i\in \mathbb{Z}_5}H_i^P-(C_5^{(2)})^P,$$ 
which together with \refeq{no_slackness2} implies that $\phi^P(H_{ij}^P)=0$. It
follows that $\phi^P(G^P_{ij})=\frac{1}{25q_{ij}}$, where $G^P_{ij}$ is defined similar to $H^P_{ij}$ with the difference that now there is an edge between the unlabeled vertices.
As $\sum \phi^P(H^P_{ij}) + \sum \phi^P(G^P_{ij}) =1$, we have
\begin{equation}\label{eq:us0}
\phi^P(F)=0  \mbox{ for each $F\in \mathcal{F}^P_7\setminus\bigcup_{i,j\in
\mathbb{Z}_5} \{G^P_{ij},H^P_{ij}\}$.}
\end{equation}

Let $H$ be now an arbitrary triangle-free graph on $n$ vertices. Similarly to the above we can expand $\pi^P(H)$ in $\mathcal{A}^P_{5+n}$.
For a homomorphism $\alpha:H\rightarrow \mathbb{Z}_5$ of $H$ to $C_5$ (with its vertices labeled cyclically), we write $F^P_\alpha$ for the
$P$-flag $(G,\theta)\in\mathcal{F}^P_{5+n}$ where the unlabeled vertices $V'\df V(G)\setminus \im (\theta)$ induce a copy of $H$, and  each vertex $v\in V'$ is adjacent only to
$\theta(\alpha(v)-1)$ and $\theta(\alpha(v)+1)$. We claim that $\phi^P$ evaluates to zero at any term $F\in \scr F^P_{5+n}$ in the expansion of $\pi^P(H)$, unless $F=F^P_\alpha$
for some homomorphism $\alpha:H\rightarrow P$. Indeed, the particular case when $H$ is an edge is shown in~\eqref{eq:us0}, and the general case follows by the same reasoning.
Furthermore, as $\phi^P(H^P_{ij})=0$ for $(i,j)\in E(P)$, we actually have that $\alpha$ must be a strong homomorphism in this case. Observe that
\begin{equation}\label{phiFh} \frac{\phi^P(F^P_\alpha)}{c_\alpha}=\phi^P\left(\prod_{v\in V(H)}H^P_{h(v)}\right)=5^{-n} \end{equation} for each strong homomorphism
$\alpha:H\rightarrow P$, where $c_\alpha$ is the multinomial coefficient, $$c_\alpha\df{n\choose |h^{-1}(1)|,\;|h^{-1}(2)|,\;|h^{-1}(3)|,\;|h^{-1}(4)|,\;|h^{-1}(5)|}.$$

Now, $\phi(H)=\phi^P(\pi^P(H))=\sum_\alpha\phi^P(F^P_\alpha)=5^{-n}\cdot \sum_\alpha c_\alpha$ (the summation is taken over all strong homomorphisms from $H$ to
$C_5$) that can be easily seen to coincide with the value $\phi_{C_5}(H)$ as given by \eqref{general_formula}.
This finishes the proof.
\end{proof}

The upper bound on the number of pentagons can be derived from Theorem~\ref{main}
on the infinite blow-up $\phi_G\in \Hom^+(\scr A^0[T_{\mathrm{TF-Graph}}], \mathbb{R})$.
Furthermore, Theorems~\ref{invariance} and~\ref{asymptotic_uniqueness} show that
the equality in the statement can be obtained only when $n$ is divisible by five and
$G$ is the balanced blow-up of the pentagon.

\begin{corollary}
\label{cor-main}
Every $n$-vertex triangle-free graph $G$ contains at most $(n/5)^5$ pentagons.
Moreover, the equality is attained only when $n$ is divisible by five and
$G$ is the balanced blow-up of the pentagon.
\end{corollary}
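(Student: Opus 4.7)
The plan is to reduce Corollary~\ref{cor-main} to Theorems~\ref{main}, \ref{asymptotic_uniqueness}, and \ref{invariance} via the infinite blow-up homomorphism $\phi_G$ associated to a finite triangle-free graph $G$ on $n$ vertices. Since $C_5$ is twin-free, formula~\refeq{rigd} specializes to
\[
\phi_G(C_5) = p(C_5,G)\cdot \frac{n(n-1)(n-2)(n-3)(n-4)}{n^5},
\]
and the number of pentagons in $G$ equals $p(C_5,G)\binom{n}{5}$ (in a triangle-free graph every pentagon is induced), so
\[
\#\{\text{pentagons in }G\} \;=\; \phi_G(C_5)\cdot \frac{n^5}{5!}.
\]
Because blow-ups of triangle-free graphs remain triangle-free, $\phi_G$ lies in $\Hom^+(\scr A^0[T_{\mathrm{TF-Graph}}],\mathbb{R})$, so applying Theorem~\ref{main} to $\phi_G$ gives $\phi_G(C_5)\leq 5!/5^5$, and the upper bound $(n/5)^5$ follows immediately.

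For the characterization of equality, I would proceed as follows. Suppose $G$ has exactly $(n/5)^5$ pentagons. The displayed identity then yields $\phi_G(C_5) = 5!/5^5$, so Theorem~\ref{asymptotic_uniqueness} forces $\phi_G = \phi_{C_5}$. Since the pentagon count is an integer and $(n/5)^5 = n^5/5^5$ is integral only when $5\mid n$ (as $5$ is prime), equality is already ruled out unless $5\mid n$; write $n = 5k$. Next, since $C_5^{(k)}$ is itself a blow-up of $C_5$ and iterated blow-ups satisfy $(C_5^{(k)})^{(j)}\cong C_5^{(kj)}$, the limiting homomorphisms coincide, giving $\phi_{C_5^{(k)}} = \phi_{C_5} = \phi_G$. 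As $G$ and $C_5^{(k)}$ both have $n$ vertices, Theorem~\ref{invariance} concludes that $G\cong C_5^{(k)}$. A quick direct count confirms that $C_5^{(k)}$ does realize $k^5=(n/5)^5$ pentagons (one per choice of representative from each part), so the characterization is non-vacuous.

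There is really no hard obstacle here: all of the substantive work has been done by the three preceding theorems. The only new ingredients are the bookkeeping identity relating $\phi_G(C_5)$ to the pentagon count, the elementary divisibility observation forcing $5\mid n$ at equality, and the identification $\phi_{C_5^{(k)}} = \phi_{C_5}$ that makes Theorem~\ref{invariance} applicable. If there is a subtle point to watch, it is ensuring that one does not conflate $\phi_G$ (which depends on the finite graph $G$) with an arbitrary homomorphism, since Theorem~\ref{asymptotic_uniqueness} supplies uniqueness only within $\Hom^+(\scr A^0[T_{\mathrm{TF-Graph}}],\mathbb{R})$; this is why the reduction is routed through $\phi_G$ from the very beginning.
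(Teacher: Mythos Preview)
Your proposal is correct and follows precisely the route the paper indicates: apply Theorem~\ref{main} to the blow-up homomorphism $\phi_G$ via the identity $\#\{\text{pentagons}\}=\phi_G(C_5)\cdot n^5/5!$ coming from~\refeq{rigd}, then combine Theorem~\ref{asymptotic_uniqueness} with Theorem~\ref{invariance} (after the divisibility observation and the identification $\phi_{C_5^{(k)}}=\phi_{C_5}$) to pin down the equality case. The paper only sketches this in the paragraph preceding the corollary; your write-up supplies exactly the details that sketch omits.
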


\bigskip

The bound attained by Corollary \ref{cor-main} is not tight when $n$ is not divisible by 5. More specifically, let $n=5\ell+a\ (0\leq a\leq 4)$, then the number of pentagons in an
almost balanced blow-up of $C_5$ with $n$ vertices\footnote{when $a=2,3$ there are two non-isomorphic almost balanced blow-ups of $C_5$ with $n$ vertices} is equal to $\chi(n)\df \ell^{5-a}(\ell+1)^a$.

\begin{conjecture} \label{conj:one}
Every triangle-free graph on $n$ vertices contains at most $\chi(n)$ pentagons.
\end{conjecture}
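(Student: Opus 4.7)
The plan has three steps: a stability theorem, a local modification argument, and an elementary optimization, together with a separate treatment of small $n$.

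First I would establish a stability result: for every $\delta>0$ there exist $\epsilon>0$ and $n_0$ such that any triangle-free graph $G$ on $n\ge n_0$ vertices with at least $(1-\epsilon)(n/5)^5$ pentagons admits a partition $V(G)=V_1\cup\cdots\cup V_5$ (indexed cyclically by $\mathbb{Z}_5$) whose edit distance to the corresponding balanced $C_5$-blow-up (edges inside some $V_i$, non-edges between adjacent $V_i,V_{i\pm 1}$) is at most $\delta n^2$. This follows from Theorems~\ref{main} and~\ref{asymptotic_uniqueness} by a standard compactness argument in flag algebras: a sequence of counterexamples would yield a limit $\phi\in\Hom^+(\scr A^0[T_{\mathrm{TF-Graph}}],\mathbb{R})$ with $\phi(C_5)=5!/5^5$ that is bounded away from $\phi_{C_5}$, contradicting uniqueness.

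The second and main step is to promote stability to exact blow-up structure. Fix an extremal $n$-vertex triangle-free graph $G$ with $n$ large. Since the almost-balanced blow-up already yields $\chi(n)=(1+o(1))(n/5)^5$ pentagons, stability provides a partition $V_1,\ldots,V_5$ of edit distance $o(n^2)$ from the induced $C_5$-blow-up. A local modification argument then eliminates defects: for every $v\in V_i$, the number of pentagons through $v$ is dominated by $|V_{i-2}||V_{i-1}||V_{i+1}||V_{i+2}|$, attained precisely when the neighborhood of $v$ is the canonical one $V_{i-1}\cup V_{i+1}$. Provided the parts $V_j$ remain comparable in size (which stability guarantees), any ``wrong'' adjacency at $v$ strictly reduces this count, so extremality forces $G$ to be an honest blow-up $C_5(a_1,\ldots,a_5)$ with $\sum a_i=n$. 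The pentagon count then equals $\prod_i a_i$, and the swap $(a_i,a_j)\to(a_i-1,a_j+1)$ whenever $a_i-a_j\ge 2$ strictly increases this product, so the sizes must differ by at most one and the maximum is exactly $\chi(n)$.

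The principal obstacle is the range $n<n_0$, where stability offers no information. Two complementary strategies are available. One may attempt to make the stability constants explicit by tracking the slack in the Cauchy--Schwarz-type calculation behind~\refeq{main_inequality}, obtaining a moderate $n_0$, and then dispatching the finite remaining range by computer enumeration of triangle-free graphs. Alternatively, one may seek a genuinely non-asymptotic supersaturation bound: estimating, for each $n$, how many pentagons are lost per edge that violates the canonical $C_5$-blow-up structure, and thus bypassing compactness entirely. Either strategy is substantial; flag-algebraic compactness typically produces astronomical $n_0$, while a combinatorial supersaturation bound for pentagons in triangle-free graphs is not currently known and would presumably require new ideas. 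A secondary but routine difficulty is the boundary case analysis on $a\in\{0,1,2,3,4\}$ when comparing $\chi(n)$ to the count of an almost-balanced blow-up after a single-vertex reassignment; this is elementary but must be verified by hand for each residue.
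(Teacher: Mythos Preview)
The statement you are attempting to prove is labeled a \emph{Conjecture} in the paper, not a theorem: the authors explicitly remark that their original proof ``contained a mistake that we were not able to fix,'' and what survives is only Theorem~\ref{thm:exactGeneral}, the case of sufficiently large $n$. So there is no ``paper's own proof'' of the full statement to compare against; the full conjecture remains open.

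For large $n$, your plan is essentially the paper's proof of Theorem~\ref{thm:exactGeneral}. The paper obtains stability not via a generic compactness/edit-distance argument but via the cut distance: it invokes Theorem~\ref{thm:convergenceCut} and Alon's inequality~\eqref{eq:Alon} to transfer $\delta_\square(G,C_5)\to 0$ into a concrete partition $A_1,\ldots,A_5$ with nearly complete bipartite graphs between consecutive parts. It then cleans out low-degree vertices, and for each surviving ``bad'' vertex $v$ either reassigns it to the correct part or shows that replacing $v$ by a canonical vertex strictly increases the pentagon count, contradicting extremality. The endgame (every pentagon uses one vertex per part, so the count is $\prod|A''_i|$, maximized when part sizes differ by at most one) is exactly your third step. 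Your edit-distance route to stability is a legitimate alternative and would reach the same cleaning stage.

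Your assessment of the obstacle is accurate: small $n$ is precisely where the paper is stuck. Neither of your two proposed strategies for closing this gap is carried out in the paper, and you should be aware of Michael's observation (cited in the paper) that at $n=8$ there is a sporadic extremal example not isomorphic to a $C_5$-blow-up, so any non-asymptotic argument must accommodate additional extremal configurations in the small range. In short, your proposal is a sound plan for the large-$n$ theorem the paper actually proves, but does not---and, as you yourself acknowledge, cannot without substantial new ideas---settle the conjecture in full.
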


The original version of this paper claimed to resolve Conjecture~\ref{conj:one}, but the proof contained a mistake that we were not able to fix. As we noted in introduction, Michael \cite{Mic} observed that for $n=8$ there exists a sporadic example with $\chi(8)=8$ pentagons. In fact, \cite{Mic} also conjectured that this
is the {\em only} sporadic example, which, in particular, would imply Conjecture~\ref{conj:one}.

We use a stability argument to settle Conjecture~\ref{conj:one} for sufficiently large $n$ in Theorem~\ref{thm:exactGeneral} below.
\section{Exact bound}\label{sec:Exact}

We define the \emph{cut norm} of an $n \times n$ matrix $A$  by
$$\|A\|_\square := \frac{1}{n^2} \max_{S,T \subseteq [n]} \left|\sum_{i \in S, j \in T} A_{ij} \right|.$$
For two graphs $G_1$ and $G_2$ on the same set of vertices $[n]$, we define their cut distance as
$$d_\square(G_1,G_2) = \|A_{G_1}- A_{G_2}\|_\square,$$
where $A_{G_1}$ and $A_{G_2}$ denote respectively the adjacency matrices of $G_1$ and $G_2$.
If $G_1$ and $G_2$ are unlabeled  graphs on different vertex sets of the same cardinality
$n$, then we define their distance by
$$\widehat{\delta}_\square(G_1,G_2) = \min_{\tilde{G}_1,\tilde{G_2}} d_\square(\tilde{G}_1,\tilde{G}_2),$$
where $\tilde{G}_1$ and $\tilde{G}_2$ range over all labellings of $G_1$ and $G_2$ by $[n]$ respectively.
Finally, let $G_1$ and $G_2$ be graphs with $n_1$ and $n_2$ vertices, respectively. Note that for every positive integer $k$,
the blow-up graphs  $G_1^{(n_2 k)}$ and $G_2^{(n_1 k)}$ have the same number of vertices. So we can define
$$\delta_\square(G_1,G_2) = \lim_{k \to \infty} \widehat{\delta}_\square(G_1^{(n_2 k)},G_2^{(n_1 k)}).$$
The function $\delta_\square$ is only a pseudometric, not a true metric, because $\delta_\square(G,G')$
may be zero for different graphs $G$ and $G'$. In fact, it is an easy consequence of Theorem 
\ref{invariance} and Theorem \ref{thm:convergenceCut} below that $\delta_\square(G,G')=0$ if and only if $G^{(k)} \cong G'^{(k')}$
for some  integers $k,k'>0$. The following theorem shows the relevance of this distance to the context of this paper.

\begin{theorem}[{\cite[Theorem 2.6]{BCLSV:ConvergentI}}§]
\label{thm:convergenceCut}
A sequence of graphs $(G_n)_{n=1}^\infty$ converges in the $\delta_\square$ distance if and only if the sequence $(\phi_{G_n})_{i=1}^n$ converges.
\end{theorem}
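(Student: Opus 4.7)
The plan is to translate the statement into the language of graphons and then invoke the fundamental theorem of graph limits. For a finite graph $G$ on $n$ vertices, associate the step-function graphon $W_G:[0,1]^2\to\{0,1\}$ obtained by partitioning $[0,1]$ into $n$ equal intervals $I_1,\ldots,I_n$ and setting $W_G(x,y)=1$ iff $x\in I_i$, $y\in I_j$ and $ij\in E(G)$. Two dictionary facts underpin the reduction. (a) The blow-up-based $\delta_\square(G_1,G_2)$ from the paper coincides with the graphon cut distance $\delta_\square(W_{G_1},W_{G_2})$: iterating the balanced blow-ups in the definition merely refines the underlying partitions, leaving the associated graphon unchanged up to measure-preserving rearrangement. (b) For every finite graph $H$, $\phi_G(H)=t_{\mathrm{ind}}(H,W_G)$, which is immediate from \refeq{general_formula} once one writes the integral $t_{\mathrm{ind}}(H,W_G)$ as a sum over the $n^{|V(H)|}$ possible assignments of the $|V(H)|$ uniform sample points to blocks and counts strong homomorphisms.

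For the forward direction, assume that the sequence $(W_{G_n})$ is Cauchy in $\delta_\square$. The \emph{counting lemma} for induced densities gives a Lipschitz-type estimate $|t_{\mathrm{ind}}(H,W)-t_{\mathrm{ind}}(H,W')|\le c_{|V(H)|}\cdot\delta_\square(W,W')$, proved by a routine telescoping argument in which one changes one edge or non-edge slot of $H$ at a time and bounds each change using the definition of the cut norm. Consequently $\phi_{G_n}(H)$ is Cauchy in $\reals$ for every $H$, and $\phi_{G_n}$ converges pointwise to some element of $\Hom^+(\scr A^0[T_{\mathrm{Graph}}],\reals)$.

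For the reverse direction, assume $\phi_{G_n}$ converges to some $\phi$. The key analytic input is that the space $\widetilde{\scr W}_0$ of graphons modulo weak isomorphism is \emph{compact} in $\delta_\square$ (the Lov\'asz--Szegedy compactness theorem). Combined with the forward direction this yields a standard subsequence argument: every subsequence of $(W_{G_n})$ has a further subsequence converging in $\delta_\square$ to some $W_\infty\in\widetilde{\scr W}_0$ which, by the counting lemma, satisfies $t_{\mathrm{ind}}(H,W_\infty)=\phi(H)$ for all finite $H$. Since graphons modulo weak isomorphism are determined by their sequences of induced subgraph densities (moment-uniqueness), the limit $W_\infty$ is independent of the subsequence, so the full sequence $(W_{G_n})$ converges in $\delta_\square$.

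The principal obstacle is the compactness of $(\widetilde{\scr W}_0,\delta_\square)$. The counting lemma and moment-uniqueness are elementary, but compactness genuinely rests on Szemer\'edi's regularity lemma (or its Frieze--Kannan weak variant): for every $\varepsilon>0$ one must construct a finite $\varepsilon$-net by approximating every graphon in cut norm by a bounded-complexity step function, after which a diagonal extraction across a countable dense family of subgraph densities produces the limit. A secondary bookkeeping issue is verifying the dictionary fact (a), which requires unwinding the two layers of blow-ups built into the definition of $\delta_\square(G_1,G_2)$ and checking that the common refinement does not affect the infimum over relabellings.
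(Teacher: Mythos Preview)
The paper does not supply its own proof of this theorem: it is quoted as \cite[Theorem~2.6]{BCLSV:ConvergentI} and used as a black box. Your outline is the standard argument from that reference (and from \cite{LovaszBook}): identify $\delta_\square$ on graphs with the graphon cut distance, derive left-to-right from the counting lemma, and derive right-to-left from Lov\'asz--Szegedy compactness together with moment uniqueness. So there is nothing to compare against in the present paper, and your sketch matches the cited source in structure and in its identification of compactness (hence the regularity lemma) as the substantive input.
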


We will also need the following fact proven by Alon (see~\cite[Theorem~9.24]{LovaszBook}). If $G_1$ and $G_2$ are two graphs on $n$ vertices, then
\begin{equation}
\label{eq:Alon}
\widehat{\delta}_\square(G_1,G_2) \le \delta_\square(G_1,G_2) + \frac{17}{\sqrt{\log n}}.
\end{equation}

\begin{theorem}
\label{thm:exactGeneral}
   There exists an integer $n_0$ such that any triangle-free graph with $n\geq n_0$ vertices and at least $\chi(n)$ pentagons must be an almost balanced blow-up of $C_5$.
\end{theorem}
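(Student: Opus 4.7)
The plan is a two-stage stability argument. Stage~1 uses the flag-algebraic uniqueness of Theorem~\ref{asymptotic_uniqueness} to establish that any near-extremal triangle-free graph is close in cut distance to an almost balanced blow-up of $C_5$. Stage~2 is a combinatorial cleanup that promotes this approximate structure to an exact almost balanced blow-up.

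For Stage~1, I would argue by contradiction: suppose there is a sequence $(G_{n_k})_{k\ge 1}$ of triangle-free graphs on $n_k\to\infty$ vertices, each with at least $\chi(n_k)$ pentagons but none of which is an almost balanced blow-up of $C_5$. Since $\chi(n)/\binom{n}{5}\to 5!/5^5$, we have $p(C_5,G_{n_k})\to 5!/5^5$; by compactness of $\Hom^+(\scr A^0[T_{\mathrm{TF-Graph}}],\mathbb{R})$, a subsequence of $(\phi_{G_{n_k}})$ converges to some $\phi$ with $\phi(C_5)=5!/5^5$, and Theorem~\ref{asymptotic_uniqueness} forces $\phi=\phi_{C_5}$. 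Since the almost balanced blow-ups $B_n$ of $C_5$ on $n$ vertices also satisfy $\phi_{B_n}\to\phi_{C_5}$, Theorem~\ref{thm:convergenceCut} yields $\delta_\square(G_{n_k},B_{n_k})\to 0$, and Alon's bound~\eqref{eq:Alon} upgrades this to $\widehat\delta_\square(G_{n_k},B_{n_k})\to 0$.

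For Stage~2, fix $\epsilon>0$ small, and for $n=n_k$ large relabel $G=G_n$ so that $d_\square(G,B)\le\epsilon$ with $B=B_n$. The blow-up structure of $B$ induces a partition $V(G)=V_1\cup\cdots\cup V_5$ which, after moving $o(n)$ vertices between classes (each move changing the cut-distance bound by only $o(1)$), may be assumed to be \emph{almost balanced}, i.e., $|V_i|\in\{\lfloor n/5\rfloor,\lceil n/5\rceil\}$ with the distribution yielding $\prod_i|V_i|=\chi(n)$. Let $\widetilde B$ denote the blow-up of $C_5$ with parts $V_1,\dots,V_5$ and set $W=E(G)\triangle E(\widetilde B)$; the cut bound gives $|W|=o(n^2)$. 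The crucial pentagon count now compares $\#\mathrm{pent}(G)$ with $\#\mathrm{pent}(\widetilde B)=\prod_i|V_i|=\chi(n)$. Each element of $W$ destroys $\Omega(n^3)$ pentagons of $\widetilde B$ through it, while triangle-freeness together with the approximate blow-up structure of $G$ limits the newly created ``misaligned'' pentagons to only $o(n^3)$ per element of $W$. Hence $\#\mathrm{pent}(G)\le\chi(n)-\Omega(|W|n^3)$, which combined with $\#\mathrm{pent}(G)\ge\chi(n)$ forces $|W|=0$, so $G=\widetilde B$ is an almost balanced blow-up of $C_5$.

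The technical heart is the Stage~2 pentagon accounting, and in particular the claim that wrong edges create only $o(n^3)$ new pentagons through them. For a wrong edge $u\in V_i,v\in V_{i+2}$, any new pentagon $u{-}v{-}a{-}b{-}c{-}u$ requires $a\in V_{i+3}$ and $c\in V_{i+4}$ (otherwise $a$ or $c$ together with $u$ and a common neighbor form a triangle), and then $b\in(V_{i+2}\cup V_{i+4})\cap(V_{i+3}\cup V_i)=\emptyset$, so in a perfectly clean blow-up no new pentagons arise at all; outside the clean region the count of new pentagons is controlled by $|W|$ times $o(n^2)$. Carrying out this tradeoff to cubic precision, together with the analogous analysis for all other configurations of wrong edges and wrong non-edges, is the main technical obstacle; the case $5\mid n$ is already immediate from Corollary~\ref{cor-main}, so the substantive new content lies precisely in this cubic-order accounting, matching the slack $(n/5)^5-\chi(n)=\Theta(n^3)$ available when $5\nmid n$.
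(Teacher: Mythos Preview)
Your Stage~1 is fine and matches the paper's approach exactly: Theorem~\ref{asymptotic_uniqueness} plus Theorem~\ref{thm:convergenceCut} and \eqref{eq:Alon} give an almost balanced partition $V_1,\dots,V_5$ with $|W|:=|E(G)\triangle E(\widetilde B)|=o(n^2)$.

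Your Stage~2, however, has a real gap. The inequality $\#\mathrm{pent}(G)\le\chi(n)-\Omega(|W|\,n^3)$ is simply false. Take the partition to be exactly balanced ($|V_i|=n/5$), let every ``correct'' edge be present, but pick one vertex $u\in V_1$ and rewire it so that $N(u)=V_3\cup V_5$ instead of $V_2\cup V_5$. This graph is triangle-free, has $|W|\approx 2n/5$, yet it is a blow-up of $C_5$ with part sizes $(n/5-1,\,n/5,\,n/5,\,n/5+1,\,n/5)$, hence contains $(n/5-1)(n/5)^3(n/5+1)=\chi(n)-(n/5)^3$ pentagons. So the deficit is $\Theta(n^3)$, not $\Omega(|W|\,n^3)=\Omega(n^4)$. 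The culprit is exactly what your last paragraph glosses over: the claim that a wrong edge $uv$ with $u\in V_i,\ v\in V_{i+2}$ forces $a\in V_{i+3}$ relies on $u$ having nearly full degree into $V_{i+1}$; a misplaced vertex violates this, and then each of its $\Theta(n)$ wrong edges lies in $\Theta(n^3)$ misaligned pentagons. Your ``outside the clean region'' correction cannot absorb this, because the lost aligned pentagons and the gained misaligned ones cancel to within $\Theta(n^3)$, not $o(n^3)$, per misplaced vertex.

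The paper does not do edge accounting against a \emph{fixed} almost-balanced template. Instead it works vertex by vertex. It first passes to the ``clean'' subparts $A'_i$ (vertices with near-full degree to both neighbouring parts), then for each exceptional vertex $v$ uses the extremality of $G$ directly: replacing $v$ by a fresh vertex joined to $A'_1\cup A'_3$ shows $v$ must lie in at least $(n/5)^4-O(\delta n^4)$ pentagons, and a short case analysis forces $v$ to have near-full degree to $A'_{i-1}\cup A'_{i+1}$ for some $i$, whereupon $v$ is moved to part $i$. Only after this reassignment does one obtain a partition $A''_1,\dots,A''_5$ with $E(G[A''_i])=E(G[A''_i,A''_{i+2}])=\emptyset$, so that the pentagon count is at most $\prod_i|A''_i|$; the almost-balanced shape then drops out from maximality of the product. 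The essential idea you are missing is this vertex-replacement use of extremality to pin down the neighbourhood type of every exceptional vertex before attempting any global count.
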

\begin{proof}
Let $\delta_1>0$ be a fixed constant  chosen to be sufficiently small to satisfy the inequalities throughout the proof.  Let $G$ be a triangle-free graph on $n$ vertices containing the maximum number of pentagons. Throughout the proof, we will always assume
that $n$ is sufficiently large. We will use some additional notation in the proof below. For $U,V \subseteq V(G)$, we will denote by $G[U]$ the subgraph of $G$ induced by $U$ and we will denote by $G[U,V]$ the  induced bipartite subgraph of $G$  with parts $U$ and $V$. For $v \in V(G)$ we denote by $\deg_U(v)$ the number of neighbors of $v$ in $U$.

By Theorem~\ref{thm:convergenceCut}~and Theorem~\ref{asymptotic_uniqueness}, we have  $\delta_\square(G,C_5) \le \delta_1/4$. Let $\mathbf{k}$ be so that $C_5^{(\mathbf{k})}$ is an almost balanced blow-up of $C_5$ on $n$ vertices. Note that
$$\delta_\square(G,C_5^{(\mathbf{k})}) \le \delta_\square(G,C_5^{(5\lfloor n/5 \rfloor)}) +\delta_\square(C_5^{(\mathbf{k})},C_5^{(5\lfloor n/5 \rfloor)}) = \delta_\square(G,C_5) +O(1/n).$$
Combining this with (\ref{eq:Alon}), we conclude that if $n$ is sufficiently large, then $\widehat{\delta}_\square(G,C_5^{(\mathbf{k})}) \le \delta_1/2.$
In particular there exists a partition $A_1,A_2,\ldots,A_5$ of $V(G)$ such that for every $i \in [5]$, it holds that $||A_i| - n/5| \leq 1$ and
\begin{equation}
\label{eq:exact1}
|E(G[A_i,A_{i+1}])| \ge |A_{i}||A_{i+1}|-\delta_1 n^2/2.
\end{equation}

Set $\delta_2=3\sqrt{\delta_1}$, and for every $i \in [5]$, let $B_i$ be the set of vertices $v \in A_i$ such that
$\deg_{A_{i-1} \cup A_{i+1}}(v) \leq |A_{i-1} \cup A_{i+1}| - \delta_2 n$. By \eqref{eq:exact1} we have
$$|E(G[A_i,A_{i-1} \cup A_{i+1}])| \ge |A_{i}||A_{i-1} \cup A_{i+1}|-\delta_1 n^2,$$
which implies $|B_i| \le \frac{\delta_1}{\delta_2} n \le \frac{\delta_2}{5} n$. Let $A_i' = A_i \setminus B_i$. We claim that
\begin{itemize}
\item[(a)] $||A_i' |- n/5| \leq \frac{\delta_2}{5}n+1\leq \delta_2n$.
\item[(b)] $\deg_{A'_{i-1} \cup A'_{i+1}}(v) \geq  |A_{i-1} \cup A_{i+1}| - \frac{7}{5}\delta_2 n \ge \frac{2n}{5} - 2 \delta_2 n$ for every $v \in A'_i$.
\item[(c)] $\deg_{A'_{i \pm 1}}(v) \ge |A_{i \pm 1}(v)| -  \frac{7}{5} \delta_2 n \ge \frac{n}{5} -2\delta_2n$ for every $v \in A'_i$.
\item[(d)] $E(G[A_i'])=E(G[A_i',A_{i+2}'])=\emptyset$.
\end{itemize}

Conditions (a) and (b) are immediate consequences of the bound on the size of $B_i$. By (b), for $v \in A'_i$, we have $\deg_{A'_{i \pm 1}}(v) \ge |A_{i \pm 1}| - \frac{7}{5} \delta_2 n $, which verifies (c). To  prove (d) note that for $\delta_2$ sufficiently small, every pair of vertices in $A_i'$ have (many) common neighbors. Hence, as $G$ is triangle-free, $E(G[A_i'])=\emptyset$. It follows similarly that $E(G[A_i',A_{i+2}'])= \emptyset$.

In the next step we eliminate the vertices in $B\df \bigcup_{i=1}^5 B_i$. Consider an arbitrary $v \in B$. We would either add $v$ to one of the parts of $A'_1,\ldots,A'_5$ maintaining the conditions on $A_{i}'$ established above (possibly with a worse constant) or show that $v$ is in few pentagons and  can be replaced by another vertex, while the number of pentagons is increased.

Let $p(v)$ denote the number of pentagons in $G$ containing $v.$ Consider first deleting the vertex $v$ and adding a new vertex to $G$ joined to every vertex in $A_1'$ and $A_3'$. By (d) the graph remains triangle-free. By (a) and (c), the new vertex is in at least
$$\left(\frac{n}{5} -\delta_2n \right)\left(\frac{n}{5} -2\delta_2n \right)^3 \geq \frac{n^4}{5^4} - 7\delta_2 n^4 =: p_1$$
pentagons for $\delta_2$ sufficiently small. As the number of pentagons in $G$ is maximum among all graphs on $n$ vertices, we conclude that $p(v) \geq p_1$.

Consider a pentagon containing $v$ and four vertices in $V(G)-B$, so that at least two of them lie in the same $A'_i$. It is not hard to verify that such a pentagon must contain a pair of non-adjacent vertices one in $A'_i$ and another in $A'_{i+1}$ for some $i \in [5]$. Thus there are at most $4 \cdot \frac{7}{5}\delta_2 n^4 \leq 6 \delta_2n^4$ such pentagons by (b). Also since $|B| \le \delta_2n$, there are at most $8\delta_2n^4$ pentagons containing $v$ and another vertex in $B$.
Let $x_i = \deg_{A'_i}(v)$. Note that if $x_i > 2\delta_2n$ for some $i$, then $x_{i+1}=x_{i-1}=0$, as  a vertex in $A'_{i \pm 1}$ can have at most $2\delta_2n$ non-neighbors in $A'_i$ by (c) and $G$ is triangle-free.
Therefore, if for every $i \in [5]$, we have $x_i \leq 2\delta_2n$ or $x_{i+2} \leq 2\delta_2n$, then we get a contradiction:
\begin{align*}
p(v) \leq 14\delta_2n^4 + 10\delta_2n\left( \frac{n}{5} + \delta_2n\right)^3 < p_1.
\end{align*}
We conclude $x_i,x_{i+2} > 2\delta_2n$ for some $i \in [5]$, and we have $x_{i-1}=x_{i+1}=x_{i+3}=0$ by the observation above. Let $\delta_3=5^3 \cdot 22\delta_2$. Suppose that
$x_i \leq \frac{n}{5}-\delta_3n$ or $x_{i+2} \leq \frac{n}{5}-\delta_3n$. Repeating the calculation above we have
\begin{align*}
p(v) &\leq 14\delta_2n^4 + \left(  \frac{n}{5} - \delta_3n \right)\left( \frac{n}{5} + \delta_2n\right)^3 <  14\delta_2n^4 + \left(  \frac{n}{5} - \delta_3n \right)\left( \frac{n^3}{5^3} + \delta_2n^3\right)\\ &< \frac{n^4}{5^4}  - \left(\frac{\delta_3}{5^3} -15 \delta_2 \right)n^4   = p_1,
\end{align*}
where the intermediate inequalities hold for $\delta_2$ sufficiently small and the last identity is by the choice of $\delta_3$. Thus $x_i,x_{i+2} \geq \frac{n}{5}-\delta_3n$ for some $i \in [5]$. We add $v$ to $A'_{i+1}$. We repeat the same procedure for every vertex of $B$.

As a result of the procedure described in the preceding paragraph, we obtain a partition $A''_1,A''_2, \ldots,A''_5$ of $V(G)$ such that  $||A''_i|-n/5|\leq 2\delta_2 n \leq \delta_3n$, $\deg_{A''_{i-1}}(v) \geq n/5-\delta_3n$ and $\deg_{A''_{i+1}}(v) \geq n/5-\delta_3n$ for every $i \in [5]$ and every $v \in A''_i$. As in (d) it follows that  $E(G[A''_i])=E(G[A''_i,A_{i+2}''])= \emptyset$ for every $i \in [5]$, if  $\delta_3$ is sufficiently small. Thus every pentagon in $G$ must contain a vertex from each of the sets $A''_1,A''_2, \ldots,A''_5$. Consequently, the number of pentagons in $G$ is upper bounded by $\prod_{i=1}^5|A''_i|$ and the equality holds if and only if every vertex of $A''_i$ is joined to every vertex in $A''_{i+1}$ for every $i \in [5]$. It is easy to see that the above product is maximized when $||A''_i| -|A''_j|| \leq 1$ for every $i,j \in [5]$. Thus $G$ must be an almost balanced blow-up of $C_5$, as desired.
 \end{proof}

\section*{Acknowledgement}
When preparing the final version of this paper we learned that
Andrzej Grzesik~\cite{Grzesik} independently proved Theorem~\ref{main}.

We thank Oleg Pikhurko for pointing out the work of Zolt\'an
F\"uredi to us. We also thank T.~S.~Michael and Humberto Naves
for pointing out the flaw in the original version of the manuscript, and referees for their comments.

\bibliographystyle{alpha}
\bibliography{erdos}

\end{document}